\def\bR{\mathbb{R}}
\newtheorem{theorem}{Theorem}[section]
\newtheorem{lemma}[theorem]{Lemma}
\newtheorem{remark}{Remark}
\newtheorem{definition}[theorem]{Definition}
\journal{****}
\begin{document}
\begin{frontmatter}
\title{\Large Accelerated proximal iterative hard thresholding method for $\ell_0$ minimization}

\author[a]{Xue Zhang}
\ead{zhangxue2100@sxnu.edu.cn}
\author[a,b]{Xiaoqun Zhang}
\ead{xqzhang@sjtu.edu.cn}

\address[a]{School of Mathematics \& Computer Science, Shanxi Normal University, Shanxi, CHINA}
\address[b]{Institute of Natural Sciences, Shanghai Jiao Tong Univeristy, Shanghai, CHINA}
\begin{abstract}
In this paper,  we consider a non-convex problem which is the sum of $\ell_0$-norm and a convex smooth function under a box constraint. We propose one proximal iterative hard thresholding type method with an extrapolation step for acceleration and  establish its global convergence results. In detail, the sequence generated by the proposed  method globally converges to a local minimizer of the objective function. Finally, we conduct numerical experiments to show the proposed method's effectiveness on comparison with some other efficient methods.
\end{abstract}
\begin{keyword}
$\ell_0$ regularization; proximal operator; hard threshholding; extrapolation; local minimizer; global convergence.
\end{keyword}
\end{frontmatter}

\section{Introduction}
In modern science and technology, signal and image processing problems have many important applications, for example, compressive sensing, machine learning and medical imaging.
Signal and image processing problems can be often formulated as
the following inverse problem
\begin{equation}\label{Ab}
A(x)+\epsilon=b,
\end{equation}
where $A$ is some linear or non-linear operator, $b$ is the observation data, $\epsilon$ is some observation error and $x$ is the vector we wanted. Problem \eqref{Ab} is usually ill-posed, thus solving \eqref{Ab} is non-trivial. To overcome this difficulty, the prior sparsity of the signals or images is usually considered. One often used minimization model is formulated as
\begin{equation}\label{fplusg}
\min_{ x\in X}f(x)+g(x)
\end{equation}
where $f(x)$ is the data fidelity term related to equation \eqref{Ab}, $g(x)$ is some regularization term to promote $x$'s sparsity, and $X\subseteq\bR^n$ is some convex constraint set. A natural idea for sparsity promotion is taking $g(x)=\lambda \|x\|_0$ where $\lambda>0$ is some regularization parameter and the notation $\|x\|_0$, $x$'s $\ell_0$  norm,  denotes the number of $x$'s nonzero elements.

It is well-known that finding the global minimizer of $\ell_0$ regularization problem is NP hard. And it is hard to  develop convergent, efficient and tractable method since  $\ell_0$-
norm is  non-convex and discontinuous.  That is also a reason why the $\ell_1$ convex relaxation model
 \begin{equation}\label{L1}
\min_{x\in X}f(x)+\lambda\|x\|_1
\end{equation}
 are largely adopted.
 However, $\ell_0$ regularization problem still has some advantages over $\ell_1$ regularization problem. For example, $\ell_1$ regularization problem may fail to recover sparse solutions for some very ill-posed
inverse problems and non-Gaussian noise corruption \cite{ZLC12}.
 Compared with $\ell_1$ regularization problem, $\ell_0$ regularization problem can directly recover sparser solutions. Moreover, the continuity of the soft thresholding operator,
\begin{equation}\label{eq:st}
\mathcal{S}_{\lambda}(c)=\arg\min \lambda\|x\|_1+\frac{1}{2}\|x-c\|^2=\mbox{sign}(c)\max\{|c|-\lambda,0\}
\end{equation}
 used for solving $\ell_1$ regularization problem, may yield loss of contrast and eroded signal peaks since all the coefficients are deduced. In statistical learning, it is also well known that $\ell_1$ solution is a biased estimator \cite{Fan2001variable}. In many applications, $\ell_0$ regularization achieves better sparse solution than $\ell_1$ regularization, for example \cite{CCSS03,DZ13,ZDL-2013}. Thus we consider the following $\ell_0$ regularization problem
\begin{equation}\label{fl0}
\min_{x\in X} \lambda\| x\|_0+f(x),
\end{equation}
 and devote to design and discuss an efficient method with simple structure.

 Analogue to the proximal forward-backward splitting (PFBS) method \cite{LM79,Passty1979Ergodic,CW05} for convex problems \eqref{fplusg}, a {proximal iterative hard thresholding} (PIHT) method is used in many works to solve $\ell_0$ regularization problem (\ref{fl0}) when $X=\bR^n$. Its convergence and convergence rate have been studied in \cite{CCSS03,BD08,BD09,HJ,JS,Lu12,Zhang2015A} under different assumptions. Typically, under the assumption that $f(x)$ has Lipschitz continuous gradient, it obtains the next iterative point by  solving a subproblem which contains a
linearization term of $f(x)$ at current iteration point $x^k$ and a proximal term.  In detail, the PIHT method  is given as
\vskip 2cm\hrule\vskip 0.1cm \noindent\textbf {PIHT Algorithm }\vskip 0.1cm\hrule\vskip 0.2cm

\noindent Choose parameters $\mu>0, \lambda>0$, starting point $x_0$; compute the Lipschitz constant $L$ of $\nabla f(x)$; let $k=0$.

\noindent\textbf {while} the stopping criterion does not hold, compute
\begin{equation}
x^{k+1}\in \arg\min_{x \in\bR^n}\lambda \| x\|_0+\frac{L}{2}\|x-x^k+\frac{1}{L}\nabla f(x^k)\|^2+\frac{\mu}{2}\|x-x^k\|^2
\label{eq:iht}
\end{equation}

$k=k+1$

\noindent\textbf {end(while)} \vskip 0.2cm\hrule\vskip 0.5cm
As well known, the step \eqref{eq:iht} can be given by
 $$x^{k+1}\in\mathcal{H}_{\sqrt{\frac{2\lambda}{L+\mu}}} (x^k-\frac{1}{L+\mu}\nabla f(x^k)),$$
 where $\mathcal{H}_{\gamma}(\cdot)$ is the hard thresholding operator, a set-valued componentwise operator, defined as
\begin{equation}\label{eq:ht}
(\mathcal{H}_{\gamma}(c))_i=
\left\{
\begin{array}{ll}
\{c_i\},      &\mbox{if}\; |c_i|>\gamma\\
\{0,c_i\} &\mbox{if}\; |c_i|=\gamma\\
\{0\},        &\mbox{if}\; |c_i|< \gamma
\end{array}
\right.
\end{equation}
where $c_i$ denotes the $i$th component of vector $c$.

Accelerated PFBS methods have been extensively considered for  solving  problem \eqref{fplusg} with convex $f,g$.  For instance, in \cite{BT09,STY-2011,Salzo2012Inexact,Attouch2015The},  extrapolation steps are utilized to  achieve a convergence complexity of  $O(1/k^2)$ (even $o(1/k^2)$ \cite{Attouch2015The}) in terms of objective value error. Similar to the accelerated technique used for accelerated proximal gradient (APG)  method for convex cases, we will propose one accelerated PIHT method for $\ell_0$ minimization using  extrapolation and provide its convergence results.

On solving non-convex problems, many algorithms, such as inertial forward-backward method  \cite{bot2014inertial} (IFB), monotone accelerated proximal gradient method \cite{chubulai} (mAPG), and non-monotone APG method \cite{chubulai} (nmAPG), are proposed to accelerate the convergence of the usual PFBS method. In \cite{BDHSZZ2016}, an extrapolated proximal iterative hard-thresholding (EPIHT) algorithm is proposed to accelerate the PIHT for $\ell_0$ minimization.  The convergence of the above mentioned algorithms are usually build upon Kurdyka-{\L}ojasiewicz (KL) property (for details, one can  see \cite{Xu2013A,Bolte2007Clarke,Bolte2008Characterizations,KURDYKA1994W,Attouch2008Proximal,1963Une})) of objective function. In this paper, we will design an extrapolated proximal algorithm for $\ell_0$ optimization  and tackle the convergence analysis directly without using the tool of KL property. The global convergence to a local minimizer of the proposed algorithm is established purely based on the convexity of $f$ and the property of $\ell_0$ function. Compared to EPIHT and some other algorithms,  one  advantage of our proposed scheme is that  a small amount of function and gradient evaluation are involved at each iteration. The setting of parameters are relatively simple compared to some other related algorithms. Finally, numerical experiments also show the effectiveness of the proposed algorithm. A detail presentation of the related algorithms and comparison will be present in Section  \ref{sec:dis}.

 The rest of the paper is organized as follows. In section \ref{sec:alg}, we introduce the proposed algorithm and establish its convergence results. In section \ref{sec:dis}, we will give a discussion on our method and  the comparison to  other state-of-the-art methods.   In section \ref{sec:test}, we conduct  experiments to show our method's  numerical performance and efficiency.

\section{ Algorithm and its convergence}
\label{sec:alg}
\subsection{Preliminaries}

We first introduce some notations, concepts and results that will be used in this paper.

\begin{itemize}
\item For any $x\in\bR^n$, $x_i$ represents $x$'s $i$-th component.
\item Given any index set $I\subseteq \{1,2,\ldots,n\}$, we let
\[
C_I:=\{x\in\bR^n: x_i=0\text{ for all }i\in I\};
\]
conversely, given any $x\in\bR^n$, we define the zero element index set of a vector $x\in\mathbb{R}^n$ as
\begin{equation}\label{eqn:Ix}
I(x):=\{i:x_i=0\}.
\end{equation}

\item The projection operator defined on a set $C\subseteq \bR^n$ is denoted by
    \[
    P_{C}(x)=\arg\min_{z\in C}\frac{1}{2}\|z-x\|^2.
    \]
    $P_{C}(\cdot)$ is continuous, namely
    \[
    \lim_{k\rightarrow +\infty}P_{C}(x^k)=P_{C}(\lim_{k\rightarrow +\infty}x^k)
     \]
     if $\lim_{k\rightarrow +\infty}x^k$ exists.
 \item For any $x\in \mathbb{R}$, $\|x\|_0=0$ if $x=0$; otherwise $\|x\|_0=1$. Then
 for any positive integer $n$ and $y\in \mathbb{R}^n$, $\|y\|_0=\sum_{i=1}^n\|y_i\|_0$ denotes the number of $y$'s nonzero elements.
\end{itemize}

\smallskip
\begin{definition}
 A mapping $T:\bR^n \rightarrow\bR^n$ is said to be $L_T$-Lipschitz
continuous on the set $X\subseteq\bR^n$ if there exists $L_T>0$ such that
\[
\|T(x)-T(y)\| \leq L_T\|x-y\|,\quad \forall x, y\in X.
\]
\end{definition}

\smallskip
\begin{definition}
Let $f:\bR^n \rightarrow \bR\cup\{+\infty\}$ be a closed proper
convex function, then the subdifferential of $f$ at $x$ is defined by
\[
\partial f(x):=\{s\in\bR^n: f(y)\geq f(x)+\langle s,y-x\rangle,\;
\forall y\in\bR^n\}.
\]
And each element $s\in \partial f(x)$ is called a subgradient of $f$ at point $x$.
Moreover, if $f$ is continuous differentiable,
$\partial f(x)=\{\nabla f(x)\}$.
\end{definition}

\smallskip
\begin{lemma}\cite{BT09}\label{Lip}
$f: \bR^n\rightarrow \bR $ is continuous differentiable. If $\nabla f(x)$ is L-Lipschitz continuous, the following inequality holds
\[
f(x)-f(y)\leq
\langle\nabla f(y),x-y\rangle+\frac{L}{2}\|x-y\|^2, \;\; \forall x,y \in \bR^n.
\]
\end{lemma}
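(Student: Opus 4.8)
The plan is to prove this standard descent inequality via the fundamental theorem of calculus applied along the line segment joining $y$ to $x$, combined with the Lipschitz bound on $\nabla f$ and the Cauchy--Schwarz inequality. First I would introduce the scalar auxiliary function $\phi(t):=f(y+t(x-y))$ for $t\in[0,1]$. Since $f$ is continuously differentiable, $\phi$ is continuously differentiable with $\phi'(t)=\langle \nabla f(y+t(x-y)),\,x-y\rangle$, so the fundamental theorem of calculus yields $f(x)-f(y)=\phi(1)-\phi(0)=\int_0^1 \langle \nabla f(y+t(x-y)),\,x-y\rangle\,dt$.

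Next I would isolate the claimed linear term. Writing $\langle \nabla f(y),x-y\rangle=\int_0^1 \langle \nabla f(y),\,x-y\rangle\,dt$ and subtracting it from the identity above, I obtain $f(x)-f(y)-\langle \nabla f(y),x-y\rangle=\int_0^1 \langle \nabla f(y+t(x-y))-\nabla f(y),\,x-y\rangle\,dt$. I would then bound the integrand pointwise in $t$: by Cauchy--Schwarz the inner product is at most $\|\nabla f(y+t(x-y))-\nabla f(y)\|\,\|x-y\|$, and the $L$-Lipschitz continuity of $\nabla f$ gives $\|\nabla f(y+t(x-y))-\nabla f(y)\|\le L\|t(x-y)\|=Lt\|x-y\|$. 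Hence the integrand is bounded above by $Lt\|x-y\|^2$.

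Finally I would integrate this bound, using $\int_0^1 Lt\,dt=\tfrac{L}{2}$, to get $f(x)-f(y)-\langle \nabla f(y),x-y\rangle\le \frac{L}{2}\|x-y\|^2$, which rearranges to the asserted inequality.

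Because this is a classical fact (often called the descent lemma), I do not anticipate a genuine obstacle; the only points requiring care are justifying the differentiation of the composition $\phi$, which is valid precisely because $\nabla f$ is continuous so that $\phi'$ is continuous and the fundamental theorem applies, and correctly tracking the scaling factor $t$ inside the Lipschitz estimate, since it is exactly this factor that produces the sharp constant $\frac{1}{2}$ rather than the naive bound $L$.
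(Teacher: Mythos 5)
Your proof is correct and is the standard argument for this classical descent lemma: the fundamental theorem of calculus along the segment, Cauchy--Schwarz, and the Lipschitz bound with the factor $t$ yielding the constant $\tfrac{L}{2}$. The paper itself gives no proof---it cites the result from the reference \cite{BT09}---and your argument is essentially the same one found in that reference, so there is nothing to correct or compare further.
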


\smallskip
\begin{lemma}\cite{BT09}\label{Lipschitz1}
Denoting
\[
B_{L}(y)=(I+\partial g/L)^{-1}(y-\nabla f(y)/L).
\]
where $g:\bR^n\rightarrow \bR$ is a proper closed convex function, $f:\bR^n\rightarrow \bR$ is convex smooth and $\nabla f$ is $L$-Lipschitz continuous. Letting $h:=f+g$, for any $x,y\in \bR^n$, the following inequality holds
\[
h(x)-h(B_L(y))\geq \frac{L}{2}\|B_{L}(y)-y\|^2+L\langle y-x,B_L(y)-y\rangle.
\]
\end{lemma}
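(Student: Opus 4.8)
The plan is to write $p := B_L(y)$ and to recognize $p$ as the unique minimizer of the strongly convex surrogate
\[
Q_L(z) := f(y) + \langle \nabla f(y),\, z-y\rangle + \tfrac{L}{2}\|z-y\|^2 + g(z),
\]
since minimizing $Q_L$ is exactly the proximal step $(I+\partial g/L)^{-1}\bigl(y-\nabla f(y)/L\bigr)$ that defines $B_L(y)$. The target inequality will then follow by a sandwiching argument: bound $h(p)$ from above using the descent property of $f$, bound $h(x)$ from below using convexity together with the optimality condition for $p$, and subtract.

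For the upper bound I would apply Lemma \ref{Lip} with the pair $(p,y)$ in place of $(x,y)$, giving
\[
f(p) \le f(y) + \langle \nabla f(y),\, p-y\rangle + \tfrac{L}{2}\|p-y\|^2;
\]
adding $g(p)$ to both sides yields $h(p) \le Q_L(p)$. For the lower bound, the defining resolvent identity $p=(I+\partial g/L)^{-1}\bigl(y-\nabla f(y)/L\bigr)$ is equivalent to the inclusion $0\in \nabla f(y)+L(p-y)+\partial g(p)$, so that $s:=-\nabla f(y)-L(p-y)$ is a subgradient of $g$ at $p$. Convexity of $g$ then gives $g(x)\ge g(p)+\langle s,\, x-p\rangle$, while convexity of the smooth $f$ gives $f(x)\ge f(y)+\langle \nabla f(y),\, x-y\rangle$; summing these produces a lower bound on $h(x)=f(x)+g(x)$.

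Finally I would subtract the upper bound on $h(p)$ from the lower bound on $h(x)$. After substituting $s=-\nabla f(y)-L(p-y)$ and writing $x-p=(x-y)-(p-y)$, the two occurrences of $\nabla f(y)$ cancel, and a short rearrangement collapses the remaining terms to $\tfrac{L}{2}\|p-y\|^2+L\langle y-x,\, p-y\rangle$, which is exactly the claimed right-hand side. The only step requiring genuine care — and the main obstacle — is translating the resolvent/prox definition of $B_L(y)$ into the correct subgradient inclusion $0\in \nabla f(y)+L(p-y)+\partial g(p)$, and then keeping the inner-product bookkeeping consistent so that the gradient terms cancel cleanly; the rest reduces to the descent lemma and elementary algebra.
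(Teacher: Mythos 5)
Your proposal is correct and complete: the paper itself gives no proof of this lemma (it is quoted directly from the cited reference \cite{BT09}), and your argument is precisely the standard one found there — identify $B_L(y)$ as the minimizer of the quadratic surrogate $Q_L$, bound $h(B_L(y))$ above by $Q_L(B_L(y))$ via the descent lemma (Lemma~\ref{Lip}), bound $h(x)$ below using convexity of $f$ and the subgradient inclusion $0\in \nabla f(y)+L(B_L(y)-y)+\partial g(B_L(y))$ coming from the resolvent, and subtract. The inner-product bookkeeping you outline does close up exactly as claimed, so there is no gap to report.
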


\subsection{Model and algorithm}
In this paper,   we consider the following minimization problem
\begin{equation}\label{eqn:functionH}
\min_{x\in\bR^n} H(x):=\lambda\| x\|_0+f(x)+\delta_X(x),
\end{equation}
where $X=\{x\in \bR^n:l\leq x\leq u \}$ ($l,u$ can be vectors), and the indicator function
\[
\delta_X(x)=
\left\{
\begin{array}{ll}
0, & \text{ if } x\in X;\\
+\infty, & \text{ otherwise}.
\end{array}
\right.
\]

\begin{remark}
If $f(x)$ is coercive, one can take $l=-\infty,u=+\infty$, all the results in this paper  still hold.  If the original problem is unconstrained and $f(x)$ isn't coercive, one can take the elements of $l$ very small and the elements of $u$ very large, for example $l=\{-10^{12}\}^n, u=-l$.
\end{remark}

\begin{remark}
Here we use the uniform parameter $\lambda \|x\|_0$ instead of the weighted $\|\bm{\lambda}\cdot x\|_0:=\sum_{i=1}^n\bm{\lambda}_i\|x_i\|_0$ for the simplicity of notation, while all the results  can be easily extended to the weighted case.
\end{remark}

\smallskip
Throughout this paper, our  assumption on problem \eqref{eqn:functionH} is\\

\newpage
\textbf{Assumption A:}
\begin{enumerate}
  \item $f$ is convex differentiable and  bounded from below on set $X$;
  \item $\nabla f$ is $L$-Lipschitz continuous on set $X$.
\end{enumerate}

\smallskip
For solving problem \eqref{eqn:functionH}, we propose the following \emph{extrapolated} type method.

\vskip 0.5cm\hrule\vskip 0.1cm \noindent\textbf { Algorithm 1}\vskip 0.1cm\hrule\vskip 0.2cm

\noindent Choose parameters $\mu>0, \lambda>0$ and a sequence of extrapolation weights $0<\omega_k\leq \omega<1$; compute the Lipschitz constant $L$ of $\nabla f(x)$; choose starting point $x^{-1}=x^0$;  let $k=0$.

\noindent\textbf {while } the stopping criterion does not hold

Let
\[y^{k+1}_i=
\left\{
\begin{array}{ll}
x^{k}_i,&i\notin I(x^k)\\
x^{k}_i+\omega_k(x^{k}_i-x^{k-1}_i),&i\in I(x^k)
\end{array}
\right.
\]

\textbf{if} $\langle y^{k+1}-x^{k}, \nabla f(y^{k+1})\rangle>0$ \mbox{ or } $y^{k+1}\notin X$
\hspace{1cm}\begin{equation}
y^{k+1}=x^{k}
\label{eq:restart}
\end{equation}

\textbf{end(if)}

\begin{equation}\label{EPIHT}
x^{k+1}\in \arg\min_{x \in X}\lambda\| x\|_0+\frac{L}{2}\|x-y^{k+1}+\frac{1}{L}\nabla f(y^{k+1})\|^2+\frac{\mu}{2}\|x-y^{k+1}\|^2
\end{equation}

$k=k+1$

\noindent\textbf {end(while)} \vskip 0.2cm\hrule\vskip 0.5cm

During the iteration, we assume that the support of  $x^k$ is more accurate than that of  $x^{k-1}$.  The extrapolation is only performed in the subspace $C_{I(x^k)}$. The gradient information is used to determine whether the extrapolation step will be accepted. In fact, if $\langle y^{k+1}-x^{k}, \nabla f(y^{k+1})\rangle\leq0$, owing to the monotonicity  of $\nabla f$ (namely $\langle y-x, \nabla f(y)-\nabla f(x)\rangle\geq0$), we can get $\langle y^{k+1}-x^{k}, \nabla f(x^{k})\rangle\leq0$; then $y^{k+1}-x^{k}$ is a decreasing direction at point $x^{k}$ for function $f(x)+\lambda\|x\|_0$ in subspace $C_{I(x^k)}$ and hence  we  think it is worth doing extrapolation; Otherwise we reset $y^{k+1}=x^k$.  And we using $\nabla f(y^{k+1})$ rather than $\nabla f(x^{k})$ to reduce the amount of computation because $\nabla f(y^{k+1})$ is used to evaluate the next iteration point $x^{k+1}$.

\begin{remark}
In the numerical experiment, one can take an appropriate selection of parameters $\omega_k$ such that $y^{k+1}$ is always in the set $X$.
\end{remark}

\subsection{Convergence analysis}

In this section, we present the convergence results of Algorithm 1. Firstly we give some properties about the solutions of the subproblem (\ref{EPIHT}) and show that $I(x^k)$, the zero element index set of iteration sequence $x^k$, changes finitely often.
For the subproblem (\ref{EPIHT}), it has separable structure since $X$ is a box constraint. So we just need discussing the property of the following problem's solution
\begin{equation}\label{1dim}
\arg\min_{\tilde{x}\in \bR} h(\tilde{x}):=\delta_{ \tilde{X}}(\tilde{x})+\lambda\|\tilde{x}\|_0+\frac{1}{2}(\tilde{x}-c)^2
\end{equation}
where $\tilde{X}:=\{\tilde{x}\in \mathbb{R}:\tilde{l}\leq \tilde{x}\leq \tilde{u}\}$. In fact, the minimum point of function
$\delta_{\tilde{X}}(\tilde{x})+\lambda+\frac{1}{2}(\tilde{x}-c)^2$ is $P_{\tilde{X}}(c)$ and it has different function value only at zero point compared with $h(\tilde{x})$. When $P_{\tilde{X}}(c)\neq 0$, we only need to compare the function value $h(0)$ and $h(P_{\tilde{X}}(c))$ to get the solution. In detail,
\begin{itemize}
\item For the case $0\notin \tilde{X}$, the solution point is certainly  $P_{\tilde{X}}(c)$.
  \item For the case $0\in \tilde{X}$,  $h(P_{\tilde{X}}(c))-h(0)=\lambda+\frac{1}{2}(P_{\tilde{X}}(c))^2-cP_{\tilde{X}}(c)$. If $c\in \tilde{X}$, the solution point is $\mathcal H_{\sqrt{2\lambda}}(P_{\tilde{X}}(c))$ since $h(P_{\tilde{X}}(c))-h(0)=\lambda-\frac{1}{2}c^2$; If $c>\tilde{u}$, the solution is obtained by comparing  $h(0)$ and $h(\tilde{u})$; If $c<\tilde{l}$, the solution is obtained by comparing  $h(0)$ and $h(\tilde{l})$.
\end{itemize}
In either case above, the solution point $\tilde{x}^*$ satisfies $|\tilde{x}^*|\geq \min(\{|l|,|u|,\sqrt{2\lambda}\}/\{0\}^C)$ if it is not zero, where $\{0\}^C$ denotes the complement of set \{0\}. Then we have the following results.
\begin{lemma}\label{lem:nochange}
Let $H(x)$ be the objective function defined in \eqref{eqn:functionH}, and $\{x_k\}_{k=0}^\infty$ be the sequence generated by  Algorithm 1. If the extrapolation weight $\omega_k$  satisfies $0\leq\omega_k\leq \omega<1$, then
\begin{enumerate}
\item $\{H(x^k)\}_{k=0}^{+\infty}$ is non-increasing;
  \item $\sum_{k=1}^\infty\|x^k-y^{k}\|^2<\infty$, $\|x^k-y^{k}\|^2\rightarrow0$;
  \item $I(x^k)$ changes only finitely often;
  \item $\sum_{k=1}^\infty\|x^k-x^{k-1}\|^2<\infty$, $\|x^k-x^{k-1}\|^2\rightarrow0$.
\end{enumerate}
\end{lemma}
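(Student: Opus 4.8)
The plan is to reduce all four statements to a single one-step sufficient-decrease inequality and then read them off in turn. Write $g^{k+1}=\nabla f(y^{k+1})$ and let $\Phi_k$ denote the objective minimized in \eqref{EPIHT}. First I would establish
\[
H(x^{k+1})\le H(x^k)-\frac{\mu}{2}\|x^{k+1}-y^{k+1}\|^2,
\]
which gives statement 1 immediately. I would obtain it by chaining two inequalities. For $H(y^{k+1})\le H(x^k)$: since the extrapolation only acts inside the subspace $C_{I(x^k)}$, the vector $y^{k+1}$ vanishes on $I(x^k)$, so $\|y^{k+1}\|_0\le\|x^k\|_0$; and the reset rule guarantees $y^{k+1}\in X$ together with $\langle y^{k+1}-x^k,g^{k+1}\rangle\le 0$, whence convexity of $f$ yields $f(y^{k+1})\le f(x^k)$, and adding the pieces gives $H(y^{k+1})\le H(x^k)$. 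For $H(x^{k+1})\le H(y^{k+1})-\frac{\mu}{2}\|x^{k+1}-y^{k+1}\|^2$: I would use that $x^{k+1}$ minimizes $\Phi_k$ over $X$ while $y^{k+1}\in X$ is feasible, so $\Phi_k(x^{k+1})\le\Phi_k(y^{k+1})=\lambda\|y^{k+1}\|_0+\frac{1}{2L}\|g^{k+1}\|^2$, then bound $f(x^{k+1})$ from above by Lemma \ref{Lip}; after cancelling the common $\frac{1}{2L}\|g^{k+1}\|^2$ term the remainder $\frac{\mu}{2}\|x^{k+1}-y^{k+1}\|^2$ survives.

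With the descent in hand, statement 2 follows at once: $H$ is nonincreasing and bounded below by $\inf_X f>-\infty$ (Assumption A), so telescoping gives $\frac{\mu}{2}\sum_k\|x^{k+1}-y^{k+1}\|^2\le H(x^0)-\inf_X f<\infty$, hence $\|x^k-y^k\|\to0$. For statement 4 I would pass from $\|x^k-y^k\|$ to $\|x^k-x^{k-1}\|$ using the explicit form of the extrapolation: $y^k-x^{k-1}$ equals $\omega_{k-1}(x^{k-1}-x^{k-2})$ on the relevant coordinates and is zero elsewhere, so $\|y^k-x^{k-1}\|\le\omega\|x^{k-1}-x^{k-2}\|$. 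Writing $a_k=\|x^k-x^{k-1}\|$ and $b_k=\|x^k-y^k\|$, the triangle inequality yields the recursion $a_k\le b_k+\omega a_{k-1}$ with $\omega<1$; unrolling it expresses $a$ as the convolution of $b$ with a geometric sequence, so $\|a\|_{\ell^2}\le(1-\omega)^{-1}\|b\|_{\ell^2}<\infty$ by statement 2, giving $\sum_k\|x^k-x^{k-1}\|^2<\infty$ and $\|x^k-x^{k-1}\|\to0$.

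Statement 3 I would deduce last, from statement 4 together with the uniform lower bound on nonzero magnitudes coming from the one-dimensional analysis of \eqref{1dim}: every nonzero component of any subproblem solution has absolute value at least the positive constant $m=\min\{|l|,|u|,\sqrt{2\lambda}\}$ (minimum over the nonzero entries), which is strictly positive since $\lambda>0$. Consequently, whenever a coordinate enters or leaves $I(\cdot)$ between steps $k-1$ and $k$, that coordinate jumps by at least $m$, so $\|x^k-x^{k-1}\|\ge m$. Since $\|x^k-x^{k-1}\|\to0$, this can occur only finitely often, so $I(x^k)$ is eventually constant.

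The main obstacle is the first link, $H(y^{k+1})\le H(x^k)$, precisely because $H$ carries the nonconvex, discontinuous $\ell_0$ term: convexity controls only the smooth part $f$, and it is the combinatorial fact that the extrapolation cannot create new nonzeros (it lives in $C_{I(x^k)}$, so $\|y^{k+1}\|_0\le\|x^k\|_0$) that tames $\|\cdot\|_0$. Making these two mechanisms cooperate in one inequality, namely the gradient-based reset for $f$ and the support containment for the $\ell_0$ part, is the crux; once the clean descent is secured, statements 2--4 reduce to telescoping, a geometric recursion, and the magnitude gap.
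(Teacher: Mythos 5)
Your proposal is correct; parts 1 and 2 coincide with the paper's own argument (the same sufficient-decrease inequality \eqref{e8}, obtained from Lemma \ref{Lip} plus minimality of $x^{k+1}$ in \eqref{EPIHT} against the competitor $y^{k+1}$, together with $\|y^{k+1}\|_0\le\|x^k\|_0$ and $f(y^{k+1})-f(x^k)\le\langle y^{k+1}-x^k,\nabla f(y^{k+1})\rangle\le0$, then telescoping), but for parts 3 and 4 you take a genuinely different route, in the opposite order. The paper proves part 3 first: nonzero entries of subproblem solutions are bounded below by $ls>0$ and $\mathrm{supp}(y^k)\subseteq\mathrm{supp}(x^{k-1})$, so $\|x^k-y^k\|\to0$ forces $\mathrm{supp}(x^k)\subseteq\mathrm{supp}(x^{k-1})$ for all large $k$, and the nested supports stabilize; only then, with the support frozen, does \eqref{EPIHT} reduce to the smooth $(L+\mu)$-strongly convex problem \eqref{subPro}, and part 4 follows from strong convexity of $Q$ combined again with Lemma \ref{Lip}. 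You instead obtain part 4 directly from part 2 through the recursion $\|x^k-x^{k-1}\|\le\|x^k-y^k\|+\omega\|x^{k-1}-x^{k-2}\|$ (valid whether or not the reset \eqref{eq:restart} fires, since in that case $y^k=x^{k-1}$) and Young's convolution inequality, and deduce part 3 afterwards from $\|x^k-x^{k-1}\|\to0$ and the magnitude gap. Your route is more elementary and more self-contained, since it never needs the support to stabilize before controlling $\|x^k-x^{k-1}\|$; what the paper's route buys is the stronger per-step inequality \eqref{e4}, which is not merely a stepping stone to part 4 but is reused verbatim in Case 2 of the proof of Theorem \ref{thm:kexists}, so adopting your proof of the lemma would require deriving \eqref{e4} separately there. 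Two small imprecisions, neither fatal: the uniform lower bound on nonzero magnitudes for the actual subproblem \eqref{EPIHT} is $\min(\{|l_j|,|u_j|,\sqrt{2\lambda/(L+\mu)}\}\cap\{0\}^C)$ rather than your $\sqrt{2\lambda}$, which is the constant for the normalized one-dimensional problem \eqref{1dim}; and when a coordinate enters the zero set between $x^{k-1}$ and $x^k$, the bound $|x^{k-1}_i|\ge m$ needs $k\ge2$ so that $x^{k-1}$ is itself a subproblem solution --- both affect only constants and finitely many indices.
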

\begin{proof}
1. Since $\nabla f(x)$ is $L$-Lipschitz continuous, from Lemma 2.3, we have
\begin{equation}\label{Lip7}
f(x^{k+1})-f(y^{k+1})\leq
\langle\nabla f(y^{k+1}),x^{k+1}-y^{k+1}\rangle+\frac{L}{2}\|x^{k+1}-y^{k+1}\|^2.
\end{equation}
It is clear that $y^{k+1}\in X$. Then from Algorithm 1, we have
\begin{eqnarray*}
&&\lambda\|x^{k+1}\|_0+
\frac{L}{2}\|x^{k+1}-y^{k+1}+\frac{\nabla f(y^{k+1})}{L}\|^2+\frac{\mu}{2}\|x^{k+1}-y^{k+1}\|^2\\
&\leq& \lambda\|y^{k+1}\|_0+
\frac{L}{2}\|y^{k+1}-y^{k+1}+\frac{\nabla f(y^{k+1})}{L}\|^2.\\
\end{eqnarray*}
By summing up the above two inequalities and using the fact $\|y^{k+1}\|_0\leq\|x^k\|_0$, $f(y^{k+1})-f(x^k)\leq\langle y^{k+1}-x^{k}, \nabla f(y^{k+1})\rangle\leq0$, we have
\begin{equation}\label{e8}
H(x^{k+1})+\frac{\mu}{2}\|x^{k+1}-y^{k+1}\|^2\leq H(x^k).
\end{equation}
It is obvious that $\{H(x^k)\}_{k=0}^{+\infty}$ is non-increasing.

2. Summing the inequality \eqref{e8} over $k=0,\ldots,n-1$, we have
\[
H(x^n)+\frac{\mu}{2}\sum_{k=0}^n\|x^k-y^{k}\|^2\leq H(x^0).
\]
So $\{\sum_{k=0}^n\|x^k-y^{k}\|^2\}$ has upper bound since $H(x)$ has lower bound on $X$. Then $\sum_{k=0}^\infty\|x^k-y^{k}\|^2<\infty$ and $\|x^k-y^{k}\|\rightarrow 0$.

3. From the implementation of iteration $k$ in  Algorithm 1 and the discussion about the property of problem (\ref{1dim})'s solution , we have
\[
|x^k_i|\geq ls:=\min(\{|l_j|,|u_j|,\sqrt{\frac{2\lambda}{L+\mu}},j=1,\cdots,n\}\cap\{0\}^C), \mbox{ for any } i\not\in I(x^k).
 \]
 Hence, we have $\|x^k-y^{k}\|\geq ls>0$ if $I(x^k)^C\nsubseteq I(y^{k})^C$. From $\|x^k-y^{k}\|\rightarrow 0$ and $I(y^k)^C\subseteq I(x^{k-1})^C$, it is easy to see that $I(x^k)^C\subseteq I(x^{k-1})^C$ always hold if $k$ is sufficiently large. Then
$I(x^k)$ must change only finitely often.

4. Assume that $I(x^k)=I(x^{k+1})$ for any $k\geq k_0$. From the subproblem (\ref{EPIHT}), we have
\begin{eqnarray*}
&&\lambda\|x^{k+1}\|_0+
\frac{L}{2}\|x^{k+1}-y^{k+1}+\frac{\nabla f(y^{k+1})}{L}\|^2+\frac{\mu}{2}\|x^{k+1}-y^{k+1}\|^2\\
&\leq& \lambda\|x\|_0+
\frac{L}{2}\|x-y^{k+1}+\frac{\nabla f(y^{k+1})}{L}\|^2+\frac{\mu}{2}\|x-y^{k+1}\|^2\\
\end{eqnarray*}
for any $x\in C:=C_{I(x^k)}\cap X$ and $k\geq k_0$. So we have
\begin{equation}\label{subPro}
x^{k+1}\in \arg\min_{x \in C} Q(x;y^{k+1}):=\frac{L}{2}\|x-y^{k+1}+\frac{1}{L}\nabla f(y^{k+1})\|^2+\frac{\mu}{2}\|x-y^{k+1}\|^2,\;\; k\geq k_0.
\end{equation}
From the optimality condition we have $0\in \partial \delta_C(x^{k+1})+\nabla Q(x^{k+1};y^{k+1})$, namely
$-\nabla Q(x^{k+1};y^{k+1})\in \partial \delta_C(x^{k+1})$. Hence for any $x\in C$, the following inequality holds
\[
0\geq \langle -\nabla Q(x^{k+1};y^{k+1}),x-x^{k+1}\rangle.
\]
Using the above inequality and the strong convexity of $Q(x;y^{k+1})$ with modulus $L+\mu$, we have
\begin{eqnarray*}
&Q(x^{k};y^{k+1})&\geq Q(x^{k+1};y^{k+1})+\langle \nabla Q(x^{k+1};y^{k+1}),x^k-x^{k+1}\rangle+\frac{L+\mu}{2}\|x^{k+1}-x^k\|^2\\
&&\geq Q(x^{k+1};y^{k+1})+\frac{L+\mu}{2}\|x^{k+1}-x^k\|^2.
\end{eqnarray*}
Combining the above inequality and (\ref{Lip7}), we obtain
\begin{eqnarray}\label{e4}
&&f(x^{k+1})+\frac{\mu}{2}\|x^{k+1}-y^{k+1}\|^2+\frac{L+\mu}{2}\|x^{k+1}-x^{k}\|^2\\
&\leq& f(y^{k+1})+\langle \nabla f(y^{k+1}), x^k-y^{k+1}\rangle+ \frac{L+\mu}{2}\|y^{k+1}-x^{k}\|^2\\
&\leq& f(x^k)+\frac{(L+\mu)\omega_k^2}{2}\|x^{k-1}-x^{k}\|^2.
\end{eqnarray}
Summing the above inequality over $k=1,\ldots,n,\ldots$, we have
\begin{eqnarray*}
&\sum_{k=0}^\infty\frac{(L+\mu)(1-\omega^2)}{2}\|x^k-x^{k-1}\|^2
&\leq\sum_{k=0}^\infty\frac{(L+\mu)(1-\omega_k^2)}{2}\|x^k-x^{k-1}\|^2\\
&& \leq f(x^0)-\min_{x\in X} f(x)<\infty.
\end{eqnarray*}
Then $\sum_{k=1}^\infty\|x^k-x^{k-1}\|^2<\infty$ and $\|x^k-x^{k-1}\|^2\rightarrow0$.
\end{proof}

In the following, we establish the convergence of $\{x^k\}_{k=0}^\infty$.
\begin{theorem}\label{thm:kexists}
Let $H(x)$ be the objective function defined in \eqref{eqn:functionH}, and $\{x^k\}_{k=0}^\infty$ be the sequence generated by Algorithm 1, then
\begin{enumerate}
  \item $x^k$ is bounded;
  \item any cluster point of $\{x^k\}_{k=0}^\infty$ is a local minimizer of $H(x)$;
      \item $H(x^k)\rightarrow H(x^*)$ where $x^*$ is a cluster point of $\{x^k\}_{k=0}^\infty$;
  \item if $\omega_k\equiv \omega\in(0,1)$, $x^k$ is convergent.

\end{enumerate}
\end{theorem}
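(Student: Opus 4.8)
The plan is to leverage the structural consequence of Lemma~\ref{lem:nochange}: there is an index $k_0$ such that $I(x^k)=I^*$ is constant for all $k\ge k_0$. For $i\in I^*$ one has $x^k_i=x^{k-1}_i=0$ once $k-1\ge k_0$, so the extrapolated point satisfies $y^{k+1}=x^k$, the restart test is never triggered, and the subproblem \eqref{EPIHT} collapses to the single projected-gradient step $x^{k+1}=P_C\bigl(x^k-\tfrac1{L+\mu}\nabla f(x^k)\bigr)$ on the convex set $C:=C_{I^*}\cap X$ for the convex function $f$. I would read off all four items from this reduction together with $\|x^k-x^{k-1}\|\to0$ and the monotonicity of $H(x^k)$ from Lemma~\ref{lem:nochange}.

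Item~1 is immediate: if the box $X$ is bounded then $x^k\in X$ is bounded, while in the coercive case $l=-\infty,\,u=+\infty$ the bound $f(x^k)\le H(x^k)\le H(x^0)$ confines $\{x^k\}$ to a bounded sublevel set of $f$. For item~2, let $x^{k_j}\to x^*$. The coordinatewise lower bound $|x^k_i|\ge ls>0$ for $i\notin I^*$ established in Lemma~\ref{lem:nochange} forces $|x^*_i|\ge ls$ there and $x^*_i=0$ on $I^*$, so $I(x^*)=I^*$ and $x^*\in C$. Passing to the limit in $x^{k_j+1}=P_C(x^{k_j}-\tfrac1{L+\mu}\nabla f(x^{k_j}))$, using $\|x^{k_j+1}-x^{k_j}\|\to0$ and continuity of $P_C$ and $\nabla f$, gives the fixed-point relation $x^*=P_C(x^*-\tfrac1{L+\mu}\nabla f(x^*))$, equivalently $\langle\nabla f(x^*),x-x^*\rangle\ge0$ for every $x\in C$. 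To promote this to a local minimizer of $H$, I would restrict to $x\in X$ with $\|x-x^*\|<r$ for $r<\min\{ls,\ \lambda/\|\nabla f(x^*)\|\}$. Then the nonzero coordinates of $x^*$ survive, so $I(x)\subseteq I^*$; if $I(x)=I^*$ then $x\in C$ and convexity plus optimality give $H(x)-H(x^*)=f(x)-f(x^*)\ge0$, whereas if $I(x)\subsetneq I^*$ then $\lambda\|x\|_0\ge\lambda\|x^*\|_0+\lambda$ dominates the loss $f(x)-f(x^*)\ge-\|\nabla f(x^*)\|\,r>-\lambda$, so again $H(x)>H(x^*)$. Item~3 is then short: $H(x^k)$ is non-increasing and bounded below, hence convergent; along $x^{k_j}\to x^*$ the count $\|x^{k_j}\|_0=\|x^*\|_0$ is eventually constant and $f(x^{k_j})\to f(x^*)$ by continuity, so $H(x^{k_j})\to H(x^*)$ and the common limit must equal $H(x^*)$.

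The crux is item~4, where I would avoid the KL framework and instead prove Fejér monotonicity of $\{x^k\}$ with respect to the convex solution set of $\min_{x\in C}f$. Using the strong convexity of the subproblem objective $Q(\cdot;x^k)$ with modulus $L+\mu$, the optimality of $x^{k+1}$ over $C$, and the descent Lemma~\ref{Lip} for $f$, I would test against an arbitrary minimizer $\bar x$ of $f$ over $C$ and obtain, after the polarization identity, $\tfrac{L+\mu}{2}\|x^k-\bar x\|^2-\tfrac{L+\mu}{2}\|x^{k+1}-\bar x\|^2\ge\tfrac{\mu}{2}\|x^{k+1}-x^k\|^2\ge0$ for $k\ge k_0$; in particular $\|x^k-\bar x\|$ is non-increasing. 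Taking $\bar x=x^*$, which item~2 shows is such a minimizer, the non-increasing sequence $\|x^k-x^*\|$ has a subsequence tending to $0$ and therefore tends to $0$, so $x^k\to x^*$.

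The main obstacle is precisely this final upgrade from subsequential to full convergence; it rests on verifying that, once the support is frozen, the iteration is exactly one projected-gradient map, so that a cluster point is a genuine fixed point against which Fejér monotonicity can be invoked. The convergence then uses nothing beyond convexity of $f$ and the frozen support, the assumption $\omega_k\equiv\omega$ serving to keep the transient iterates within the scope of Lemma~\ref{lem:nochange}.
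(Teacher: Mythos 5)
Your argument for items 1--3 is essentially sound and close in spirit to the paper's, but the whole proposal stands on one structural claim: that once $I(x^k)$ freezes at $I^*$, the extrapolation degenerates to $y^{k+1}=x^k$, the restart test \eqref{eq:restart} never fires, and the tail of the method is the plain projected-gradient iteration $x^{k+1}=P_C\bigl(x^k-\tfrac{1}{L+\mu}\nabla f(x^k)\bigr)$ on $C=C_{I^*}\cap X$. That claim comes from reading the typeset formula for $y^{k+1}$ literally, i.e.\ extrapolating the coordinates $i\in I(x^k)$ and copying the others. But the two branches of that formula are evidently transposed (a typo): the surrounding text says the extrapolation is performed \emph{in} the subspace $C_{I(x^k)}$, so $y^{k+1}$ must vanish on $I(x^k)$, and, decisively, the proof of Lemma~\ref{lem:nochange} uses $\|y^{k+1}\|_0\le\|x^k\|_0$ and $I(y^k)^C\subseteq I(x^{k-1})^C$, both of which are false under your reading (extrapolating a coordinate $i\in I(x^k)$ with $x^{k-1}_i\neq 0$ re-activates it and gives $\|y^{k+1}\|_0>\|x^k\|_0$). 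So under your reading, Lemma~\ref{lem:nochange}, which you invoke throughout, is not even established; under the intended reading, your reduction is false: for $k\ge k_0$ one has $y^{k+1}=x^k+\omega_k(x^k-x^{k-1})$ on the frozen support, the restart test stays live, and the tail iteration is an \emph{inertial} projected-gradient method, not a plain one. (A sanity check that should have raised a flag: your reading makes the hypothesis $\omega_k\equiv\omega$ in item 4 irrelevant and makes the algorithm coincide with PIHT for all large $k$, i.e.\ the extrapolation --- the paper's whole point --- would asymptotically be a no-op.)

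This breaks precisely the step you yourself identify as the crux of item 4. Fej\'er monotonicity of $\|x^k-\bar x\|$ holds when the projected-gradient step is taken from $x^k$, but here it is taken from $y^{k+1}$; the cross terms in $\omega(x^k-x^{k-1})$ are exactly what destroy monotonicity of $\|x^k-\bar x\|$ (inertial schemes are not Fej\'er monotone in general). This is why the paper's proof of item 4 is long: in the case where the extrapolation is eventually always accepted, it proves monotonicity of the shifted quantity $\tfrac{2}{L+\mu}f(x^k)+\|x^k-\omega x^{k-1}-(1-\omega)x^*\|^2$, obtained by combining the two inequalities from Lemma~\ref{Lipschitz1} (tested at $x=x^*$ and $x=x^k$) with weights $1-\omega$ and $\omega$; and in the case where restarts \eqref{eq:restart} occur infinitely often, it interleaves the two kinds of descent inequalities through a carefully (re)defined Lyapunov sequence $u^k$. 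The constancy $\omega_k\equiv\omega$, which your argument never uses, is needed exactly for this construction. Your items 1--3 survive with a minor repair --- replace $x^{k_j}$ by $y^{k_j+1}$ in the fixed-point limit, using $\|y^{k+1}-x^k\|\le\omega\|x^k-x^{k-1}\|\to 0$ --- but item 4 needs a genuinely different argument of the paper's type.
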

\begin{proof}

1.  It is clear that $x^k\in X$ is bounded.

2. Assume that $x^*$ is a cluster point of $\{x^k\}_{k=0}^\infty$ and the subsequence $
 x^{k_j}$ converging to $x^*$. From Lemma~\ref{lem:nochange}, $\|x^k-x^{k-1}\|\rightarrow 0$ and $I(x^k)$ changes only finitely often. So we have $\|x^k-y^{k+1}\|\rightarrow 0$ and there exists $k_0$ such that for any $k\geq k_0$,  $I(y^{k+1})=I(x^k)=I(x^{k+1})=I(x^*)$.

From  (\ref{subPro}), we have
  \[
x^{k+1}=P_{C_{I(x^*)}\cap X}(y^{k+1}-\frac{\nabla f(y^{k+1})}{L+\mu}), \;\; k\geq k_0.
\]
 Letting  $k$ be equal to $k_j$ and $j$ tend to infinity, from the continuity of projection operator, we obtain
\begin{eqnarray*}
&x^*=P_{C_{I(x^*)}\cap X}(x^*-\frac{\nabla f(x^*)}{L+\mu})
\end{eqnarray*}
Since $X$ is a box constraint, we have
\[
x^*_i=P_{C_{I(x^*_i)}\cap [l_i,u_i]}(x^*_i-\frac{(\nabla f(x^*))_i}{L+\mu}).
\]
From the proof of Lemma \ref{lem:nochange}, if $x^*_i\neq 0$, $C_{I(x^*_i)}=\bR$ and $x^*_i=P_{[l_i,u_i]}(x^*_i-\frac{(\nabla f(x^*))_i}{L+\mu})$.
From the property of projection operator, we have $(x_i-x^*_i)(x^*_i-\frac{(\nabla f(x^*))_i}{L+\mu}-x^*_i)\leq0 $, namely $(x_i-x^*_i)(\nabla f(x^*))_i\geq0$, for any $x\in X$.

Denote
\[
U:=\{\Delta x:\Delta x+x^*\in X, \|\Delta x\|_{\infty}<\min_{i\in I(x^*)}\{\frac{\lambda}{|(\nabla f(x^*))_i|}\}; |\Delta x_i|<|x^*_i|,\forall i\notin I(x^*) \},
 \]
Then for any
$\Delta x\in U$, we have
\begin{itemize}
\item $\sum_{i\not\in I(x^*)}\lambda\|x^*_i+
\Delta x_i\|_0=\sum_{i\not\in I(x^*)}\lambda\|x^*_i\|_0,\;\forall i\notin I(x^*)$ since $|\Delta x_i|<|x^*_i|$;
           \item $\Delta x_i(\nabla f(x^*))_i\geq0,\;\forall i\notin I(x^*)$ since $x^*+\Delta x\in X$;
           \item $\lambda\|\Delta x_i\|_0+(\nabla f(x^*))_i\Delta x_i\geq0,\; i\in I(x^*)$ since
               \[
               \|\Delta x\|_{\infty}<\min_{i\in I(x^*)}\{\lambda/|(\nabla f(x^*))_i|\}.
               \]
                Furthermore if $\Delta x_i\neq 0$, it is clear that $\lambda\|\Delta x_i\|_0+(\nabla f(x^*))_i\Delta x_i>0$.
         \end{itemize}

         From the above conclusions, for any $\Delta x \in U $,  we have
\begin{eqnarray*}
&H(x^*+\Delta x)-H(x^*)&=\lambda\|x^*+\Delta x\|_0-\lambda\|x^*\|_0+f(x^*+\Delta x)-f(x^*) \\
&& \geq \sum_{i\in I(x^*)}\lambda\|
\Delta x_i\|_0+\langle \nabla f(x^*), \Delta x\rangle\\
&&=\sum_{i\in I(x^*)}(\lambda\|\Delta x_i\|_0+(\nabla f(x^*))_i\Delta x_i)+\sum_{i\not\in I(x^*)}\Delta x_i(\nabla f(x^*))_i\\
&&\geq0
\end{eqnarray*}
It is clear that $x^*+U$ is a neighborhood of $x^*$. So $x^*$ is a local minimizer of objective function $H(x)$.

3. From the inequality (\ref{e8}),
$H(x^{k+1})$ is non-increasing. $f(x)$ is bounded from below on $X$,  so we have
$H(x^k)$ is convergent.
Furthermore, $H(x^{k})\rightarrow H(x^*)$ since $I(x^k)=I(x^*)$ when $k\geq k_0$ and $H(x^{k_j})\rightarrow H(x^*)$.

4.  As we have known,  for any $k\geq k_0$ and $x\in C:=C_{I(x^*)}\cap X$, we have
\begin{eqnarray*}
&x^{k+1}&=P_C(y^{k+1}-\frac{\nabla f(y^{k+1})}{L+\mu}),
\end{eqnarray*}
and $I(y^{k+1})=I(x^k)=I(x^{k+1})=I(x^*)$.
Then using Lemma \ref{Lipschitz1}, for any $x\in C$, we can obtain
\begin{equation}\label{Subpro2}
f(x^{k+1})\leq f(x)+(L+\mu)\langle x-y^{k+1}, x^{k+1}-y^{k+1}\rangle-\frac{L+\mu}{2}\|x^{k+1}-y^{k+1}\|^2,k\geq k_0.
\end{equation}
Setting $x=x^*$ and $x=x^{k}$ respectively, we have
\begin{equation}\label{e3}
f(x^{k+1})\leq f(x^*)+(L+\mu)\langle x^*-y^{k+1}, x^{k+1}-y^{k+1}\rangle-\frac{L+\mu}{2}\|x^{k+1}-y^{k+1}\|^2,k\geq k_0,
\end{equation}
\begin{equation}\label{e2}
f(x^{k+1})\leq f(x^{k})+(L+\mu)\langle x^k-y^{k+1}, x^{k+1}-y^{k+1}\rangle-\frac{L+\mu}{2}\|x^{k+1}-y^{k+1}\|^2,k\geq k_0.
\end{equation}
Note that either $y^{k+1}=x^{k}+\omega(x^{k}-x^{k-1})$ or $y^{k+1}=x^k$, the above inequality always holds.

\textbf{Case 1:} there exists $k_1>k_0$ such that $y^{k+1}=x^{k}+\omega(x^{k}-x^{k-1})$ for any $k\geq k_1$.

Multiplying the inequality (\ref{e3}) by $1-\omega$ and (\ref{e2}) by
$\omega$, then adding the two resulting inequalities, and using the fact $f(x^*)\leq f(x^k)$, we obtain, for any $k\geq k_1$,
\begin{eqnarray*}
&&\frac{2}{ L+\mu}(f(x^{k+1})-f(x^k))\\
&\leq&2\langle (1-\omega)x^*+\omega x^k-y^{k+1}, x^{k+1}-y^{k+1}\rangle-\|x^{k+1}-y^{k+1}\|^2\\
&=&\|y^{k+1}-\omega x^{k}-(1-\omega)x^*\|^2-\|x^{k+1}-\omega x^k-(1-\omega)x^*\|^2\\
&=&\|x^{k}-\omega x^{k-1}-(1-\omega)x^*\|^2-\|x^{k+1}-\omega x^k-(1-\omega)x^*\|^2.\\
\end{eqnarray*}
This implies $\{\frac{2}{ L+\mu}f(x^{k})+\|x^{k}-\omega x^{k-1}-(1-\omega)x^*\|^2\}_{k\geq k_1}$ is a non-increasing sequence. So it is convergent.
Noting that $x^*$ is a cluster point of $\{x^k\}_{k=0}^{+\infty}$, $f(x^k)\rightarrow f(x^*)$ and $x^{k}-x^{k-1}\rightarrow0$, we can obtain $\|x^{k}-x^*\|^2\rightarrow0$.

\textbf{Case 2:} for any $k_1>k_0$, there exists $k>k_1$  such that $y^{k+1}=x^{k}$.

For simplicity, denote $\sigma_n:=\frac{2}{L+\mu}f(x^{n})$ and $\rho_n:=\|x^{n}-x^{n-1}\|^2$.
If  $y^{n+1}=x^{n}$, from the inequality (\ref{e3}), we obtain
\begin{equation}\label{e6}
0\leq\frac{2}{ L+\mu}(f(x^{n+1})-f(x^*))\leq\|x^n-x^*\|^2-\|x^{n+1}-x^*\|^2.
\end{equation}
Combing it with inequality \eqref{e4}, we have
\begin{equation}\label{e7}
\sigma_{n+1}+\rho_{n+1}+(1-\omega)^2\|x^{n+1}-x^*\|^2\leq \sigma_n+\omega^2\rho_n
+(1-\omega)^2\|x^{n}-x^*\|^2.
\end{equation}
If $y^{n+1}=x^{n}+\omega(x^{n}-x^{n-1})$, from the discussion in case 1, we have
\begin{equation}\label{e9}
\sigma_{n+1}+\|x^{n+1}-\omega x^n-(1-\omega)x^*\|^2\leq \sigma_{n}+
\|x^{n}-\omega x^{n-1}-(1-\omega)x^*\|^2.
\end{equation}

Without loss of generality, we assume that
\begin{eqnarray*}
&&y^{k_0+1}=x^{k_0}, \cdots, y^{k_1}=x^{k_1-1},\\
&&y^{k_1+1}=x^{k_1}+\omega(x^{k_1}-x^{k_1-1}), \cdots, y^{k_2}=x^{k_2-1}+\omega(x^{k_2-1}-x^{k_2-2}),\\
&&y^{k_2+1}=x^{k_2}, \cdots, y^{k_3}=x^{k_3-1},\\
\end{eqnarray*}
and this happens again and again. So we just need discuss for $k_1\leq k\leq k_3-1$.
Form the inequality \eqref{e6}, \eqref{e7} and \eqref{e9}, we can obtain
\[
\sigma_{k_0+1}+\rho_{k_0+1}+(1-\omega)^2\|x^{k_0+1}-x^*\|^2\leq \sigma_{k_0}+\omega^2\rho_{k_0}+
(1-\omega)^2\|x^{k_0}-x^*\|^2
\]
$$\vdots$$
\[
\sigma_{k_1}+\rho_{k_1}+(1-\omega)^2\|x^{k_1}-x^*\|^2\leq \sigma_{k_1-1}+\omega^2\rho_{k_1-1}+
(1-\omega)^2\|x^{k_1-1}-x^*\|^2
\]
\[
\sigma_{k_1+1}+\|x^{k_1+1}-\omega x^{k_1}-(1-\omega)x^*\|^2\leq\sigma_{k_1}+
\|x^{k_1}-\omega x^{k_1-1}-(1-\omega)x^*\|^2
\]
$$\vdots$$
\[
\sigma_{k_2}+\|x^{k_2}-\omega x^{k_2-1}-(1-\omega)x^*\|^2\leq \sigma_{k_2-1}+
\|x^{k_2-1}-\omega x^{k_2-2}-(1-\omega)x^*\|^2
\]
\[
\sigma_{k_2+1}+\rho_{k_2+1}+(1-\omega)^2\|x^{k_2+1}-x^*\|^2\leq \sigma_{k_2}+\omega^2\rho_{k_2}+
(1-\omega)^2\|x^{k_2}-x^*\|^2
\]
$$\vdots$$
\[
\sigma_{k_3}+\rho_{k_3}+(1-\omega)^2\|x^{k_3}-x^*\|^2\leq \sigma_{k_3-1}+\omega^2\rho_{k_3-1}+
(1-\omega)^2\|x^{k_3-1}-x^*\|^2
\]
We denote the terms on the right side of the above inequalities as sequence $\{u^k\}_{k\geq k_0}^{k_3-1}$.
It's clear that $\{u^k\}_{k\geq k_0}^{k_1-1}$, $\{u^k\}_{k\geq k_1}^{k_2-1}$ and $\{u^k\}_{k\geq k_2}^{k_3-1}$ is non-increasing.
For the following situation
\[
\sigma_{k_2}+\|x^{k_2}-\omega x^{k_2-1}-(1-\omega)x^*\|^2\leq \sigma_{k_2-1}+
\|x^{k_2-1}-\omega x^{k_2-2}-(1-\omega)x^*\|^2
\]
\[
\sigma_{k_2+1}+\rho_{k_2+1}+(1-\omega)^2\|x^{k_2+1}-x^*\|^2\leq \sigma_{k_2}+\omega^2\rho_{k_2}+
(1-\omega)^2\|x^{k_2}-x^*\|^2,
\]
if $\langle x^{k_2}-x^{k_2-1},  x^{k_2}-x^*\rangle\geq0 $, then
\begin{eqnarray*}
&u^{k_2-1}&\geq \sigma_{k_2}+\|x^{k_2}-\omega x^{k_2-1}-(1-\omega)x^*\|^2\\
&&\geq\sigma_{k_2}+\omega^2\|x^{k_2}-x^{k_2-1}\|^2+(1-\omega)^2\|x^{k_2}-x^*\|^2\\
&&= u^{k_2}
\end{eqnarray*}
and the sequence $\{u^k\}_{k\geq k_1}^{k_3-1}$ is non-increasing;
otherwise
\begin{eqnarray*}
&\|x^{k_2-1}-x^*\|^2&=\|x^{k_2-1}-x^{k_2}\|^2+\|x^{k_2}-x^*\|^2-2\langle x^{k_2}-x^{k_2-1},  x^{k_2}-x^*\rangle\\
&&\geq \|x^{k_2}-x^*\|^2,
\end{eqnarray*}
combing it with inequality \eqref{e4}, we have
\[
\sigma_{k_2}+\rho_{k_2}+(1-\omega)^2\|x^{k_2}-x^*\|^2\leq \sigma_{k_2-1}+\omega^2\rho_{k_2-1}+
(1-\omega)^2\|x^{k_2-1}-x^*\|^2,
\]
then we redefine $u^{k_2-1}:=\sigma_{k_2-1}+\omega^2\rho_{k_2-1}+
(1-\omega)^2\|x^{k_2-1}-x^*\|^2$ and hence $u^{k_2-1}\geq u^{k_2}$,
repeating the above process for $\{u^k\}_{k\geq k_2-1}^{k_1}$ and redefine $u^k$ if necessary, we can obtain a non-increasing sequence
$\{u^k\}_{k\geq k_1}^{k_2-1}$. If $u^{k_1}$ isn't redefined, the following situation happens
\[
\sigma_{k_1}+\rho_{k_1}+(1-\omega)^2\|x^{k_1}-x^*\|^2\leq \sigma_{k_1-1}+\omega^2\rho_{k_1-1}+
(1-\omega)^2\|x^{k_1-1}-x^*\|^2:=u^{k_1-1}
\]
\[
\sigma_{k_1+1}+\|x^{k_1+1}-\omega x^{k_1}-(1-\omega)x^*\|^2\leq\sigma_{k_1}+
\|x^{k_1}-\omega x^{k_1-1}-(1-\omega)x^*\|^2:=u^{k_1}.
\]
Noting that $\|x^{k_1}-x^*\|^2\leq \|x^{k_1-1}-x^*\|^2$, hence
\begin{eqnarray*}
&u^{k_1}&= \sigma_{k_1}+\omega\|x^{k_1}-x^{k_1-1}\|^2+
(1-\omega)\|x^{k_1}-x^*\|^2-\omega(1-\omega)\|x^{k_1-1}-x^*\|^2\\
&&\leq \sigma_{k_1}+\|x^{k_1}-x^{k_1-1}\|^2+
(1-\omega)^2\|x^{k_1}-x^*\|^2\\
&&\leq \sigma_{k_1-1}+\omega^2\rho_{k_1-1}+
(1-\omega)^2\|x^{k_1-1}-x^*\|^2\\
&&=u^{k_1-1};
\end{eqnarray*}
otherwise, the following situation happens
\[
\sigma_{k_1}+\rho_{k_1}+(1-\omega)^2\|x^{k_1}-x^*\|^2\leq \sigma_{k_1-1}+\omega^2\rho_{k_1-1}+
(1-\omega)^2\|x^{k_1-1}-x^*\|^2:=u^{k_1-1}
\]
\[
\sigma_{k_1+1}+\rho_{k_1+1}+(1-\omega)^2\|x^{k_1+1}-x^*\|^2\leq \sigma_{k_1}+\omega^2\rho_{k_1}+
(1-\omega)^2\|x^{k_1}-x^*\|^2:=u^{k_1}
\]
and it's clear that $u^{k_1-1}\geq u^{k_1}$.
In summary, we can obtain a non-increasing sequence $\{u^k\}_{k\geq k_0}^{k_3-1}$ where $u^k=\sigma_{k}+\omega^2\rho_{k}+
(1-\omega)^2\|x^{k}-x^*\|^2$ or $u^k=\sigma_{k}+\|x^{k}-\omega x^{k-1}-(1-\omega)x^*\|^2$. Repeating this process, we finally obtain
a non-increasing sequence $\{u^k\}_{k\geq k_0}^{+\infty}$. So it's convergent. Combining the fact
 $x^*$ is a cluster point of $\{x^k\}_{k=0}^{+\infty}$, $\sigma_k\rightarrow \frac{2}{L+\mu}f(x^{*})$ and $x^{k}-x^{k-1}\rightarrow0$, we can obtain that
 $\|x^k-x^*\|\rightarrow0$.
\end{proof}

\section{Discussions}
\label{sec:dis}
Recently, some extrapolation type methods were proposed for $\ell_0$ regularization problem or more general non-convex problems. In particular,
 the inertial forward-backward (IFB) method \cite{bot2014inertial} for
 solving problem (\ref{fplusg})(both $f(x)$ and $g(x)$ can be non-convex) uses  Bregman distance. Under
  Kurdyka-{\L}ojasiewicz property  theoretical framework, the sequence generated by IFB method converges to a critical point when $f+g$ is coercive.
  When we take the Bregman distance function  as $\|\cdot\|^2/2$, IFB method is the algorithm proposed by \cite{Ochs2014iPiano} while $g(x)$ needs to be convex. If we
 apply IFB method to $\ell_0$ regularization problem (\ref{fl0}), the iterative scheme is
\begin{eqnarray*}
&&y^{k+1}=x^{k}+2\beta_k(x^k-x^{k-1})\\
&&x^{k+1}\in \arg\min_{x \in X}\lambda\| x\|_0+\frac{1}{4\alpha_k}\|x-x^{k}+2\alpha_k\nabla f(x^{k})\|^2+\frac{1}{4\alpha_k}\|x-y^{k+1}\|^2
\end{eqnarray*}
where $\alpha_k,\beta_k>0$ satisfy
\begin{eqnarray}
&&0<\underline{\alpha}\leq\alpha_k\leq \overline{\alpha},0<\beta_k\leq\beta\\ \label{IFB1}
&&1>\overline{\alpha}L+2\beta\frac{\overline{\alpha}}{\underline{\alpha}}. \label{IFB2}
\end{eqnarray}
for some  $\overline{\alpha},\underline{\alpha},\beta>0$ and $\nabla f$'s Lipschitz constant $L$.
It is easy to see that when $\beta_k\equiv0$, IFB becomes PIHT. Usually a larger $\alpha_k$ leads to a  faster convergence. However, the above inequality \eqref{IFB2} implies that  a larger $\alpha_k$ leads to a small $\beta_k$, thus the extrapolation step will have small effect on the speed of IFB method. In other words, one  cannot have both of large $\alpha_k$ and $\beta_k$. This limits the acceleration effect of IFB method  against PIHT method.

The extrapolated PIHT (EPIHT) method \cite{BDHSZZ2016} is  proposed for solving
\[
\min_{x\in\bR^n} H(x):= \lambda\| x\|_0+f(x)+\frac{t}{2}\|x\|^2,
\]
where $f$ is convex and $\nabla f$ is Lipschitz continuous, could have both large step size and large extrapolated step size. Its iterative scheme takes the form
\begin{eqnarray*}
&&y^{k+1}=x^{k}+\omega_k(x^{k}-x^{k-1})\\
&&\mbox{if } H(y^{k+1})>H(x^{k}), \;\; \mbox{reset } y^{k+1}=x^k\\
&&x^{k+1}\in \arg\min_{x \in\bR^n}\lambda\| x\|_0+\frac{L+t}{2}\|x-y^{k+1}+\frac{\nabla f(y^{k+1})}{L+t}\|^2+\frac{\mu}{2}\|x-y^{k+1}\|^2
\end{eqnarray*}
where $\mu>0$, $0<\omega_k\leq\omega<1$. It is similar with the IFB method except that the linearization is performed at $y^{k+1}$ instead of $x^k$ and the setting for parameters is also different.
Under Kurdyka-{\L}ojasiewicz property theoretical framework, the sequence generated by EPIHT method globally converges to a local minimizer of $H(x)$.

For more general problem (\ref{fplusg}) (both $f(x)$ and $g(x)$  can be non-convex), \cite{chubulai} proposed the following monotone accelerated proximal gradient (mAPG) method
\begin{eqnarray*}
&&y^k=x^k+\frac{t_{k-1}}{t_k}(z^k-x^k)+\frac{t_{k-1}-1}{t_k}(x^k-x^{k-1})\\
&&z^{k+1}=\arg\min_zg(z)+\frac{1}{2\alpha_y}\|z-y^k+\alpha_y\nabla f(y^k)\|^2\\
(\mbox{mAPG})\quad&&v^{k+1}=\arg\min_vg(v)+\frac{1}{2\alpha_x}\|v-x^k+\alpha_x\nabla f(x^k)\|^2\\
&&t_{k+1}=\frac{\sqrt{1+4t_k^2}+1}{2}\\
&&x^{k+1}=
\left\{
\begin{array}{cl}
  z^{k+1},& \mbox{ if }f(z^{k+1})+g(z^{k+1})\leq f(v^{k+1})+g(v^{k+1}) \\
  v^{k+1},& \mbox{ otherwise}
\end{array}
\right.
\end{eqnarray*}
When $f(x), g(x)$ are convex, mAPG has $O(1/k^2)$~convergence rate; otherwise, any cluster point of iteration sequence is a critical point of $f(x)+g(x)$. Based on mAPG, \cite{chubulai} also proposed a non-monotone APG(nmAPG) for saving the computation cost in each step.
Denote
\[
q_1=1,c_1=f(x_1)+g(x_1);q_{k+1}=\eta q_k+1, c_{k+1}=\frac{\eta q_kc_k+f(x^{k+1})+g(x^{k+1})}{q_{k+1}}.
\]
If $f(z^{k+1})\leq c_k-\delta\|x^{k+1}-y^k\|^2$, nmAPG gets the next iteration point by $x^{k+1}=z^{k+1}$, otherwise, it gets the next iteration point same with mAPG.

Moreover, \cite{Pock2017Inertial} proposed an inertial proximal alternating linearized minimization (iPALM) method for solving problem
\[
\min_{x,y}s(x)+q(x,y)+r(y).
\]
The iterative sequence has global convergence.  If $r(y)\equiv 0$, $s(x)=\lambda\|x\|_0$ and $q(x,y)=f(x)$, the above problem reduces to
\eqref{fl0}, and iPALM is simplified as
\begin{eqnarray*}
&& y^k=x^k+\alpha_k(x^k-x^{k-1})\\
&& z^{k}=x^k+\beta_k(x^k-x^{k-1})\\
&& x^{k+1}=\arg\min_{x\in X} \lambda\|x\|_0+\frac{1}{2\tau_k}\|x-y^k+\tau_k\nabla f(z^k)\|^2
\end{eqnarray*}
If the objective function satisfies Kurdyka-{\L}ojasiewicz property, the iteration sequence has global convergence, but the extrapolated step length $\alpha_k, \beta_k$, and the proximal parameter $\tau_k$ need to satisfy an equation.

\begin{table}
\centering
\begin{tabular}{|c|c|c|c|c|c|}
\hline
Method& Assumption &  Parameters& Convergence &NCf&NCGf\\
\hline
IFB& nonconvex $f$, KL & BC+inequalities & globally&0&1 \\
EPIHT& convex $f$, KL & BC &globally&2&1 \\
mAPG& nonconvex $f$ & BC &subsequence&2&2 \\
iPALM&nonconvex $f$, KL & BC+equation  &globally&0&1\\
Our& convex $f$ & BC &globally&0&1 or 2 \\\hline
\end{tabular}
\caption{Comparisons of IFB, EPIHT, mAPG, iPALM and our method. Parameters refer to $\alpha_k, \beta_k$ in IFB, $\omega_k, \mu$ in EPIHT, $\alpha_x, \alpha_y$ in mAPG, $\alpha_k, \beta_k, \tau_k$ in iPALM, $\omega_k, \mu$ in our method and BC represents box constraint. NCf represents the number of computation of $f$ and NCGf represents the number of computation of $\nabla f$  during one iteration.  }\label{e1}
\end{table}
In Table \ref{e1}, we summarize  some  differences of the above mentioned algorithms and our method. All the methods require $f$ being differentiable and $\nabla f$ being  Lipschitz continuous and this is not stated again in the table. We point out that:
\begin{itemize}
  \item  our method's global convergence analysis does not rely on  KL property;
  \item the constraint conditions of parameters of EPIHT, mAPG and our method are relatively simpler compared to other methods;
  \item IFB and iPALM methods need the least amount of computation per iteration,  but the conditions on the algorithm parameters are more complex or  restricted,  which could increase the total number of iteartions;
  \item compared with EPIHT and mAPG, our method need less computation cost for one iteration; This can cost less computation time when the  total iteration number is fixed.
\end{itemize}
\section{Numerical Implementation}
\label{sec:test}
In this section, we will show some numerical results of Algorithm 1 on $\ell_0$ minimization problems \eqref{eqn:functionH}, and compare with the results of PIHT, IFB, mAPG, nmAPG, EPIHT methods. All the experiments are conducted in MATLAB using a desktop computer equipped with a $4.0$GHz $8$-core AMD processor and $16$GB memory.

\subsection{Compressive sensing}
\label{sec:cs}
We first test the algorithms on  a standard sparse signal reconstruction problem in compressive sensing \cite{EJ06}. The goal is to reconstruct a sparse signal from a set of noisy linear measurements. The  following $\ell_0$ regularization formulation can be considered
\begin{equation}\label{exsig2}
\min_{x\in X }\frac{1}{2}\|Ax-b\|^2+\lambda\|x\|_0
\end{equation}
where $A\in \mathbb{R}^{m\times n}$ is a data matrix, $b\in \mathbb{R}^m$ is an observation vector, and $X=\{x\in \bR^n|-10^{10}\leq x\leq 10^{10}\}$.
We set $f(x)=\|Ax-b\|^2/2$ and then the Lipschitz constant of $\nabla f(x)$ is $L=\lambda_{\max} (A^\top A)$, where $\lambda_{\max} (A^\top A)$ denotes the maximum eigenvalue of $A^\top A$.

For this experiment, the data matrix $A\in \bR^{m\times n}$ is a Gaussian random matrix and the columns of $A$ are normalized to have $\ell_2$ norm of $1$. We set $m=3000$ and test on different size of $n$ and sparsity level $s$ of the unknown signal.  For each choice of $(n,s)$, we generate the true signal $\bar{x}\in\bR^n$ containing $s$ randomly
placed $\pm 1$ spikes. The observed data $b\in \bR^m$ is generated by
\[
b=Ax+\eta,
\]
where $\eta$ is a white Gaussian noise of variance $0.05$. And for each pair of $(n,s)$, we run our experiment $50$ times to guarantee that the result is independent of any particular realization of the random matrix and true signal $\bar{x}$.

For all the methods, the stopping criteria is commonly set to be
\[
\frac{\|x_k-x_{k-1}\|}{\mbox{max}\{1,\|x_k\|\}}<10^{-5},
\]
and the initial point is obtained by FISTA\cite{BT09} for $\ell_1$ minimization (where the initial point is $x_0=A^Tb$, and the stopping criteria is $\frac{\|x_k-x_{k-1}\|}{\mbox{max}\{1,\|x_k\|\}}<10^{-2}$, the regularization  parameter $\lambda=0.1$) and the corresponding iteration number and running time are added in the final results.
All the parameters are chosen according to empirically the lowest relative error $\frac{\|x-\bar{x}\|}{\|\bar{x}\|}$. In detail, we choose $\ell_0$ regularization $\lambda_1=0.3$; choose $\mu=10^{-6}$, $\omega_k=0.99$ for Algorithm 1 and EPIHT method; choose $\beta_k=10^{-6}$, $\alpha_k=(0.999999-2\beta_k)/L$ for IFB method; choose $\alpha_x=\alpha_y=1/(L+10^{-6})$ for mAPG and nmAPG method, moreover choose $\eta=0.8$ for  nmAPG method. For each algorithm and each choice of $(n,s)$ of the solution $\bar{x}$, we conduct 50 experiments and record the average runtime, the average relative error $\frac{\|x-\bar{x}\|}{\|\bar{x}\|}$ to the original signal $\bar{x}$,
  the average number of iteration the algorithm needed and their standard variance. In fact, we find that the approximate solutions' $\ell_0$-norm of all the methods are always equal to the true $s$ and we do not list them in the tables.

The average relative errors of all the methods are very close. In fact, if the results are rounded up to $4$ decimal digits,  the results are the same as present in Table \ref{CS2}£¬especially all the results if PIHT and IFB methods.
 The average number of iterations and average runtime
are listed in Table \ref{CS1}, \ref{CS3} respectively to compare the convergence speed of different methods. We observe that:
\begin{itemize}
  \item in term of average number of iterations (Table \ref{CS1}), mAPG method , especially EPIHT and our method,
      have obvious accelerating effect  compared to PIHT;
  \item recall that the amounts of computation for each step of the algorithms are different; although EPIHT and our method have fewer, similar iteration number, our method obviously has less runtime (Table \ref{CS3});
 \item  the stability of all the methods is comparable, by  the standard variance result present in Table \ref{CS1}, \ref{CS3}.
\end{itemize}

In Table \ref{CS3}, our method has  less runtime compared to EPIHT as it requires less computation of gradient function (NCGf) per iteration.  The  average  total number of computation of gradient function is recorded  in Table \ref{e5} based on $20$ times of experiments. It can be observed that if the step \eqref{eq:restart} occurs in every iteration, the total NCGf should be two times the number of iterations. In fact, Table \ref{e5} demonstrates that the restart step \eqref{eq:restart} occurs in a low rate. Thus the extrapolation contributes to the reduction of computation and the number of  iterations.

\begin{table}
\centering
\begin{tabular}{|c|c|c|}
\hline
\multicolumn{1}{|c|}{$s$}&\multicolumn{1}{|c|}{$n$}
&\multicolumn{1}{|c|}{ Average
relative error/Standard variance}\\
\hline
&8000&0.0491/0.0040\\

 $\lfloor\frac{n}{100}\rfloor$&14000&0.0502/0.0032\\

&20000&0.0504/0.0026 \\  \cline{1-3}

&8000&0.0512/0.0030
\\
 $\lfloor\frac{2n}{100}\rfloor$&14000&0.0513/0.0024\\

 &20000&0.0521/0.0018 \\
\cline{1-3}
\end{tabular}
\caption{Results of the average and the standard variance of the
relative error.}\label{CS2}
\end{table}

\begin{table}[htb]\footnotesize
\centering
\begin{tabular}{|c|c|llllll|}
\hline
\multicolumn{1}{|c|}{$s$}&\multicolumn{1}{|c|}{$n$}
&\multicolumn{6}{|c|}{ Average
number of iteration/Standard variance}\\
\hline
&&PIHT&IFB&mAPG&nmAPG&EPIHT&Algorithm 1\\ \cline{2-8}
&8000&  55.0/1.0 & 55.0/1.0 & 42.4/0.8& 57.7/0.9& \textbf{33.7}/\textbf{0.5} & 33.9/0.8\\   \cline{2-8}
$\lfloor\frac{n}{100}\rfloor$ &14000&79.8/1.5& 79.8/1.5  & 59.8/1.0& 82.3/2.1& 44.3/\textbf{0.5} & \textbf{43.2}/1.8\\  \cline{2-8}

&20000& 105.1/1.4 & 105.1/1.4& 76.1/0.9& 103.9/1.2& 54.0/0.6& \textbf{52.8}/\textbf{0.4}\\  \cline{1-8}

&8000& 59.7/0.9& 59.7/0.9& 45.4/0.7& 61.7/0.7& \textbf{35.0}/\textbf{0.3}& 36.5/1.7\\ \cline{2-8}

$\lfloor\frac{2n}{100}\rfloor$  &14000&92.1/1.5 & 92.1/1.5 & 67.9/1.1& 93.5/1.6& 48.5/\textbf{0.5} & \textbf{47.6}/0.8\\ \cline{2-8}

 &20000&128.8/2.1& 128.8/2.1 & 91.8/1.6& 126.5/3.8& \textbf{66.5}/2.2& \textbf{66.5}/\textbf{0.8}\\  \cline{1-8}
\end{tabular}
\caption{Results of the average and the standard variance of iterations number. }\label{CS1}
\end{table}

\begin{table}[htb]\footnotesize
\centering
\begin{tabular}{|c|c|llllll|}
\hline
\multicolumn{1}{|c|}{$s$}&\multicolumn{1}{|c|}{$n$}
&\multicolumn{6}{|c|}{ Average
runtime/Standard variance}\\
\hline
&&PIHT&IFB&mAPG&nmAPG&EPIHT&Algorithm 1\\ \cline{2-8}
&8000& 4.1/0.20 & 4.1/0.20  & 6.9/0.37 &
5.9/0.28 & 3.8/0.18 & \textbf{2.9}/\textbf{0.15}\\ \cline{2-8}

$\lfloor\frac{n}{100}\rfloor$&14000& 10.1/0.21&
10.1/\textbf{0.20} & 16.3/0.32 & 14.0/0.39 & 8.1/0.07& \textbf{6.0}/0.29\\\cline{2-8}

 &20000&19.1/0.294 & 19.1/0.28  & 29.6/0.48 & 25.5/0.30 & 13.8/0.18 & \textbf{10.3}/\textbf{0.09}\\\cline{1-8}

&8000& 4.3/0.09 & 4.3/0.09  & 7.1/0.19 &
6.1/0.12 & 3.7/\textbf{0.06} & \textbf{3.0}/0.17\\\cline{2-8}

$\lfloor\frac{2n}{100}\rfloor$ &14000&11.7/0.20 &
11.7/0.20  & 18.4/0.35 & 16.0/0.28 & 8.6/\textbf{0.08 } & \textbf{6.5}/0.12\\\cline{2-8}
 &20000&22.9/0.62 & 22.9/0.64  & 34.6/0.99 & 30.3/1.14 & 16.5/0.84 & \textbf{12.8}/\textbf{0.34}\\ \cline{1-8}
\end{tabular}
\caption{Results of the average and the standard variance of the
runtime.}\label{CS3}
\end{table}

\begin{table}
\centering
\begin{tabular}{|c|c|c|c|}
\hline
\multicolumn{1}{|c|}{$s$}&\multicolumn{1}{|c|}{$n$}
&\multicolumn{1}{|c|}{ Average
iteration/Average total NCGf}&\multicolumn{1}{|c|}{ $\frac{\mbox{Average
iteration}}{\mbox{Average total NCGf}}$}\\
\hline
&8000&16.2/21.2&0.7642\\

 $\lfloor\frac{n}{100}\rfloor$&14000& 17.8/22.0&0.8091\\

&20000&20.0/24.0& 0.8333 \\  \cline{1-4}

&8000&17.2/22.2&0.7748
\\
 $\lfloor\frac{2n}{100}\rfloor$&14000&18.8/23.0&0.8174\\

 &20000&28.2/33.6 &0.8392\\
\cline{1-4}
\end{tabular}
\caption{Results of the average total NCGf.  NCGf represents the number of computation of $\nabla f$.}\label{e5}
\end{table}

\subsection{Logistic Regression}

Given a set of training data $(x_i,y_i),i=1,
 \cdots,N$, where the input $x_i\in \mathbb{R}^n$, and the output $y_i\in\{1,-1\}$. We wish to
find a classffication rule from the training data, so that when given a new input $x$, we can
assign a class $y$ from $\{1,-1\}$ to it. For this example, we consider the following  sparse logistic regression model using $\ell_0$ regularization
\[
\min_{(u,v)\in X} \frac{1}{N}\sum_{i=1}^N\log(1+\exp(-y_i(u^Tx_i+v)))+\lambda
\|u\|_0
\]
where $X=[-10^{10},10^{10}]^{n+1}$.
The data set gisette used for our numerical experiment is taken from \cite{Edmunds2016TMAC}. The train set  contains 6000 samples of 5000 dimensions, and the test set  contains 1000 samples of 5000 dimensions.

For all the methods, the stopping criteria is commonly set to be
\[
\|x_k-x_{k-1}\|_{\infty}<5\times10^{-4},
\]
 and the initial point is obtained by FISTA\cite{BT09} for $\ell_1$ minimization (where the initial point is $zeros(n+1,1)$, and the stopping criteria is $\|x_k-x_{k-1}\|_{\infty}<0.02$, the regularization  parameter $\lambda=0.001$).
All the parameters are chosen according to accuracy. In detail, we choose the penalty parameter $\lambda=0.00005$; choose $\mu=10^{-6}$, $\omega_k=0.99$ for Algorithm 1 and EPIHT method; choose $\beta_k=10^{-6}$, $\alpha_k=(0.999999-2\beta_k)/L$ for IFB method; choose $\alpha_x=\alpha_y=1/(L+10^{-6})$ for mAPG and nmAPG method, moreover choose $\eta=0.6$ for nmAPG method.

\begin{table}
\centering
\begin{tabular}{|c|c|c|c|}
\hline
Method&Iteration number&Runtime&Accuracy\\
\hline
PIHT&443&493.7&0.9710\\
IFB&443&494.3&0.9710\\
mAPG&254&735.0&0.9700\\
nmAPG&199&301.3&0.9720\\
EPIHT&116&211.0&0.9780\\
Our&136&216.6&0.9760\\
\hline
\end{tabular}
\caption{Numerical results of logistic regression. }\label{LSR2}
\end{table}
%
%
%
The results are listed in Table \ref{LSR2}.  We can see that the results of Algorithm 1 and EPIHT are better than other three methods in the sense of iterations number, runtime and accuracy. Although the iteration number of Algorithm 1 is bigger than EPIHT's, their runtime is close, and the accuracy is comparable.

\section{Conclusions and perspectives}
In this paper, we proposed one proximal iterative hard thresholding type method--Algorithm 1, for solving the $\ell_0$ regularized problem. We provide some convergence analysis for the proposed method. We further show in some numerical experiments that, the algorithm 1 is faster than PIHT, IFB, mAPG, nmAPG and EPIHT or comparable with EPIHT.

\section*{Acknowledgements}

This work was partially supported by NSFC (No. 9133010 2),
the Young Top-notch Talent program of China, 973 program (No. 2015CB856004).

\end{document}